\newtheorem{theorem}{Theorem}[section]
\newtheorem{lemma}[theorem]{Lemma}
\newtheorem{definition}{Definition}[section]
\newtheorem{corollary}[theorem]{Corollary}
\def\moverlay{\mathpalette\mov@rlay}
\def\mov@rlay#1#2{\leavevmode\vtop{%
    \baselineskip\z@skip \lineskiplimit-\maxdimen
    \ialign{\hfil$\m@th#1##$\hfil\cr#2\crcr}}}
\newcommand{\charfusion}[3][\mathord]{
  #1{\ifx#1\mathop\vphantom{#2}\fi
    \mathpalette\mov@rlay{#2\cr#3}
  }
  \ifx#1\mathop\expandafter\displaylimits\fi}
\DeclareRobustCommand\bigop[1]{%
  \mathop{\vphantom{\sum}\mathpalette\bigop@{#1}}\slimits@
}
\newcommand{\bigop@}[2]{%
  \vcenter{%
    \sbox\z@{$#1\sum$}%
    \hbox{\resizebox{\ifx#1\displaystyle.9\fi\dimexpr\ht\z@+\dp\z@}{!}{$\m@th#2$}}%
  }%
}
\newcommand{\bigjoin}{\bigop{\triangledown}}
\DeclareMathOperator{\join}{\triangledown}
\newcommand{\cupdot}{\charfusion[\mathbin]{\cup}{\cdot}}
\DeclareMathOperator{\bigcupdot}{\charfusion[\mathop]{\bigcup}{\cdot}}
\definecolor{jade}{rgb}{0.0, 0.66, 0.42}
\newcommand{\child}{\mathsf{child}}
\DeclareMathOperator*{\argmax}{arg\,max}
\definecolor{darkorchid}{rgb}{0.6, 0.2, 0.8}
\newcommand{\AX}[1]{\textnormal{#1}}
\providecommand{\keywords}[1]{\textbf{\textit{Keywords: }} #1}
\begin{document}

\title{Hierarchical Colorings of Cographs}

\author[AGU]{Dulce I. Valdivia}

\author[LEI]{Manuela Gei{\ss}}

\author[GRE,SAR]{Marc Hellmuth} 

\author[JUR]{Maribel Hern{\'a}ndez Rosales}

\author[LEI,LEI-other,MIS,TBI,BOG,SFI]{Peter F. Stadler}    

\affil[AGU]{Universidad Aut{\'o}noma de Aguascalientes,
  Centro de Ciencias B{\'a}sicas,
  Av.\ Universidad 940, 20131 Aguascalientes, AGS, M{\'e}xico}

\affil[LEI]{Bioinformatics Group, Department of Computer Science \&
  Interdisciplinary Center for Bioinformatics, Universit{\"a}t Leipzig,
  H{\"a}rtelstra{\ss}e 16-18, D-04107 Leipzig, Germany}

\affil[GRE]{Institute of Mathematics and Computer Science, University of
  Greifswald, Walther-Rathenau-Stra{\ss}e 47, D-17487 Greifswald, Germany}

\affil[SAR]{Center for Bioinformatics, Saarland University, Building E 2.1, 
  P.O.\ Box 151150, D-66041 Saarbr{\"u}cken, Germany}

\affil[JUR]{CONACYT-Instituto de Matem{\'a}ticas, UNAM Juriquilla, Blvd.\
  Juriquilla 3001, 76230 Juriquilla, Quer{\'e}taro, QRO, M{\'e}xico}

\affil[LEI-other]{German Centre for Integrative Biodiversity Research
  (iDiv) Halle-Jena-Leipzig, Competence Center for Scalable Data Services
  and Solutions Dresden-Leipzig, Leipzig Research Center for Civilization
  Diseases, and Centre for Biotechnology and Biomedicine at Leipzig
  University at Universit{\"a}t Leipzig}

\affil[MIS]{Max Planck Institute for Mathematics in the Sciences,
  Inselstra{\ss}e 22, D-04103 Leipzig, Germany} 

\affil[TBI]{Institute for Theoretical Chemistry, University of Vienna,
  W{\"a}hringerstrasse 17, A-1090 Wien, Austria}

\affil[BOG]{Facultad de Ciencias, Universidad National de Colombia, Sede
  Bogot{\'a}, Colombia}

\affil[SFI]{Santa Fe Insitute, 1399 Hyde Park Rd., Santa Fe NM 87501,
  USA}

\date{\ }

\maketitle

\abstract{ 
  Cographs are exactly hereditarily well-colored graphs, i.e., the graphs
  for which a greedy coloring of every induced subgraph uses only the
  minimally necessary number of colors $\chi(G)$. In recent work on
  reciprocal best match graphs so-called hierarchically coloring play an
  important role. Here we show that greedy colorings are a special case of
  hierarchical coloring, which also require no more than $\chi(G)$ colors.
}

\keywords{graph colorings: Grundy number; cographs; phylogenetic
 combinatorics}
%

\section{Introduction and Preliminaries}

Let $G=(V,E)$ be an undirected graph. A (proper vertex) coloring of $G$ is
a surjective function $\sigma: V\to S$ such that $xy\in E$ implies
$\sigma(x)\ne \sigma(y)$. The minimum number $|S|$ of colors such that
there is a coloring of $G$ is known as the \emph{chromatic number}
$\chi(G)$.  A \emph{greedy coloring} of $G$ is obtained by ordering the set
of colors and coloring the vertices of $G$ in a random order with the first
available color. The \emph{Grundy number} $\gamma(G)$ is the maximum number
of colors required in a greedy coloring of $G$
\cite{Christen:79}. Obviously $\gamma(G)\ge\chi(G)$. Determining $\chi(G)$
\cite{Karp:72} and $\gamma(G)$ \cite{Zaker:06} are NP-complete problems.  A
graph $G$ is called \emph{well-colored} if $\chi(G)=\gamma(G)$
\cite{Zaker:06}. It is \emph{hereditarily well-colored} if every induced
subgraph is well-colored.

\begin{definition}[\cite{Corneil:81}]
  \label{def:Corneil}
  A graph $G$ is a \emph{cograph} if $G=K_1$, $G$ is the disjoint union
  $G=\bigcupdot_i G_i$ of cographs $G_i$, or $G$ is a join
  $G=\bigjoin_i G_i$ of cographs $G_i$.
\end{definition}
This recursive construction induces a rooted tree $T$, whose leaves are
individual vertices corresponding to a $K_1$ and whose interior vertices
correspond to the union and join operations. We write $L(T)$ for the leaf
set and $V^0(T)$ for the set of inner vertices of $T$. The set of children
of $u$ is denoted by $\child(u)$. For edges $e=uv$ in $T$ we adopt the
convention that $v$ is a child of $u$. We define a labeling function
$t:V^0(T)\rightarrow\{0,1\}$, where an interior vertex $u$ of $T$ is
labeled $t(u)=0$ if it is associated with a disjoint union, and $t(u)=1$
for joins. The set $L(T(u))$ denotes the leaves of $T$ that are descendants
of $u$.  To simplify the notation we will write $G(u):=G[L(T(u))]$ for the
subgraph of $G$ induced by the vertices in $L(T(u))$. Note that $G(u)$ is
the graph consisting of the single vertex $u$ if $u$ is a leaf of $T$.

Given a cograph $G$, there is a unique \emph{discriminating}
cotree\footnote{In \cite{Corneil:81} the discriminating cotree is defined
  as \emph{the} cotree associated with $G$. Here we call every tree arising
  from Def.~\ref{def:Corneil} \emph{a} cotree of $G$.} in which adjacent
operations are distinct, i.e., $t(u)\neq t(v)$ for all interior edges
$uv\in E(T)$.  It is possible to refine the discriminating cotree by
subdiving a disjoint union or join into multiple disjoint unions or joins,
respectively \cite{Boecker:98,Corneil:81}. It is well known that every
induced subgraph of a cograph is again a cograph. A graph is a cograph if
and only if it does not contain a path $P_4$ on four vertices as an induced
subgraph \cite{Corneil:81}.  The cographs are also exactly the hereditarily
well-colored graphs \cite{Christen:79}. The chromatic number of a cograph
$G$ can be computed recursively, as observed in
\cite[Tab.1]{Corneil:81}. Starting from $\chi(K_1)=1$ as base case we have
\begin{equation}
  \begin{split}
    \chi(G) &= \chi\left(\bigcupdot_{i} G_i\right) = \max_{i} \chi(G_i)
    \textrm{ or }\\
    \chi(G) &= \chi\left(\bigjoin_{i} G_i\right) = \sum_{i} \chi(G_i)
  \end{split}
  \label{eq:rec1}
\end{equation}

\emph{Hierarchically colored cograph} (\emph{hc-cographs}) were introduced
in \cite{Geiss:19x} as the undirected colored graphs recursively defined by
 \begin{description}
 \item[\AX{(K1)}] $(G,\sigma)=(K_1,\sigma)$, i.e., a colored vertex, or
 \item[\AX{(K2)}] $(G,\sigma)= (H_1,\sigma_{H_1}) \join (H_2,\sigma_{H_2})$
   and $\sigma(V(H_1))\cap \sigma(V(H_2))=\emptyset$, or
 \item[\AX{(K3)}]
   $(G,\sigma)= (H_1,\sigma_{H_1}) \cupdot (H_2,\sigma_{H_2})$ and 
   $\sigma(V(H_1))\cap \sigma(V(H_2)) \in
   \{\sigma(V(H_1)),\sigma(V(H_2))\}$,
\end{description}
where $\sigma(x) = \sigma_{H_i}(x)$ for every $x\in V(H_i)$, $i\in\{1,2\}$
and $(H_1,\sigma_{H_1})$ and $(H_2,\sigma_{H_2})$ are hc-cographs.

This recursive construction of an hc-cograph $G$ implies a binary \emph{cotree} $(T,t)$. Its inner vertices can be associated with the
intermediate graphs in the construction. We say that $\sigma$ is an
\emph{hc-coloring w.r.t.\ $(T,t)$.}

Obviously, the graph $G$ underlying an hc-cograph is a cograph.
\begin{definition}\label{def:hc-coloring}
  Let $G=(V,E)$ be a cograph. A coloring $\sigma:V\to S$ of $G$ is an
  \emph{hc-coloring} of $G$ if there is binary cotree $(T,t)$ of $G$ such
  that $(G,\sigma)$ is hc-cograph w.r.t.\ $(T,t)$.
\end{definition}
This contribution aims to investigate the properties of hc-colorings and
their relationships with other types of cograph colorings.

\section{Existence of hc-Colorings}

As noticed in \cite{Geiss:19x}, a coloring $\sigma$ of a cograph $G$ may be
an hc-coloring w.r.t.\ some cotree $(T,t)$ but not w.r.t.\ to another
cotree $(T',t')$ that yields the same cograph. An example is shown in
Fig. \ref{Fig:diffcotree}.

\begin{figure}[h]
  \begin{center}
    \includegraphics[scale=0.55]{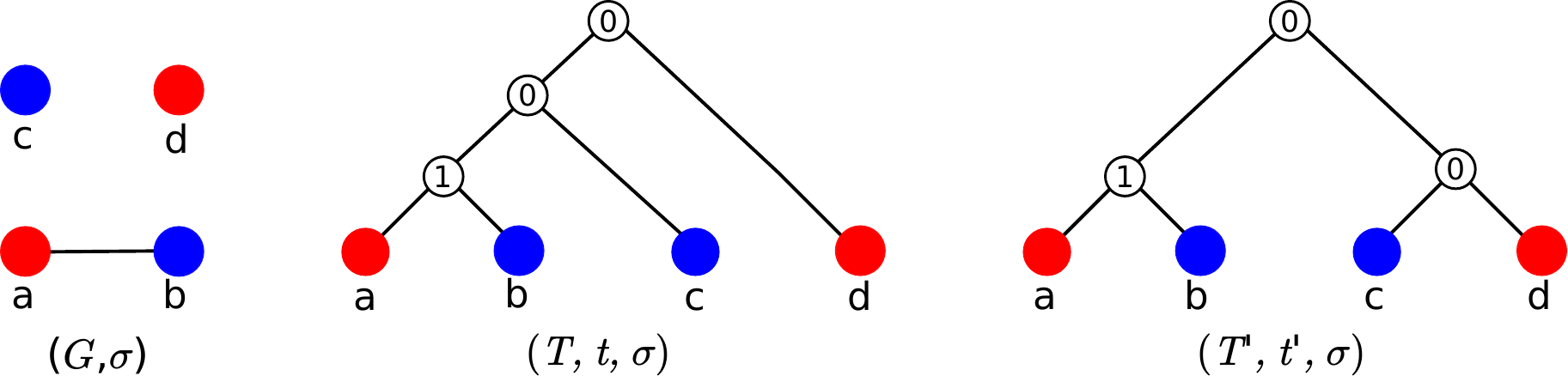}		
  \end{center}
  \caption{The induced cotree of a cograph $G$ affects the hc-coloring
    property of $\sigma$, where
    $\sigma(a)=\sigma(d)\neq\sigma(b)=\sigma(c)$. In $(T,t)$, the first
    tree from left to right, \AX{(K1)}-\AX{(K3)} are satisfied making
    $\sigma$ an hc-coloring.  However the second tree $(T',t')$ does not
    satisfy \AX{(K3)} since the parent node of $c\simeq K_1$ and
    $d\simeq K_1$ corresponds to a disjoint union operation and
    $\sigma(c)\cap\sigma(d)=\emptyset$. Thus $\sigma$ is not an hc-coloring
    w.r.t. $(T',t')$.}
  \label{Fig:diffcotree}
\end{figure}

\begin{theorem}
  Let $\sigma:V\to S$ be an hc-coloring of a cograph $G$. Then
  $|S|=\chi(G)$.
  \label{thm:hc-implies-chrom}
\end{theorem}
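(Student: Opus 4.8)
The plan is to strengthen the statement to a claim about every node of the witnessing cotree and prove it by structural induction. Let $(T,t)$ be a binary cotree with respect to which $\sigma$ is an hc-coloring, and for each vertex $u$ of $T$ set $S(u) := \sigma(L(T(u)))$, the set of colors occurring in $G(u)$. I would show that $|S(u)| = \chi(G(u))$ for every $u$; evaluating this at the root gives $|S| = \chi(G)$, which is the assertion. Note that $|S| \ge \chi(G)$ holds trivially because $\sigma$ is a proper coloring, so the content lies entirely in the upper bound, which the induction supplies directly.

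The base case is the leaf case: $G(u) = K_1$ forces $|S(u)| = 1 = \chi(K_1)$. For the inductive step I would take an inner vertex $u$ with children $u_1, u_2$, write $H_i := G(u_i)$, and assume $|S(u_i)| = \chi(H_i)$ for $i \in \{1,2\}$. Two subcases arise according to $t(u)$. If $t(u) = 1$ then $G(u) = H_1 \join H_2$ and axiom \AX{(K2)} gives $S(u_1) \cap S(u_2) = \emptyset$, so $|S(u)| = |S(u_1)| + |S(u_2)|$; combining the induction hypothesis with the join line of Eq.~\eqref{eq:rec1} yields $\chi(H_1) + \chi(H_2) = \chi(G(u))$. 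If $t(u) = 0$ then $G(u) = H_1 \cupdot H_2$ and axiom \AX{(K3)} says $S(u_1) \cap S(u_2) \in \{S(u_1), S(u_2)\}$, i.e.\ one color set contains the other, so that $|S(u)| = |S(u_1) \cup S(u_2)| = \max\{|S(u_1)|, |S(u_2)|\}$; the induction hypothesis together with the disjoint-union line of Eq.~\eqref{eq:rec1} then gives $\max\{\chi(H_1), \chi(H_2)\} = \chi(G(u))$.

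I expect the one substantive point—everything else being bookkeeping—to be the disjoint-union case. For a general proper coloring of $H_1 \cupdot H_2$ there is no relation between the two color sets, and $|S(u)|$ can lie strictly between $\max\{|S(u_1)|, |S(u_2)|\}$ and $|S(u_1)| + |S(u_2)|$; it is precisely the nesting condition enforced by \AX{(K3)} that collapses the union to the larger of the two sets and thereby matches the $\max$ in the chromatic recursion. The fact that the cotree is binary and need not be discriminating causes no difficulty, since Eq.~\eqref{eq:rec1} applies to each two-child node regardless of whether adjacent labels coincide.
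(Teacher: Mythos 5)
Your proposal is correct and follows essentially the same route as the paper's proof: the paper inducts on $|V|$ while you phrase it as structural induction over the cotree, but both arguments apply \AX{(K2)} with the join line of Eq.~\eqref{eq:rec1} and \AX{(K3)} with the disjoint-union line in exactly the same way. Your observation that \AX{(K3)} is what collapses the union of color sets to the maximum matches the paper's step $|\sigma(V)|=\max(|\sigma(V_1)|,|\sigma(V_2)|)$ verbatim in substance.
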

\begin{proof}
  We proceed by induction w.r.t.\ $|V|$. The statement is trivially true
  for $|V|=1$, i.e. $G=K_1$, since $\chi(K_1)=1$. Now suppose $|V|>1$. Thus
  $G=G_1\join G_2$ or $G=G_1 \cupdot G_2$ for some graphs $G_1=(V_1,E_1)$
  and $G_1=(V_2,E_2)$ with $1\leq|V_1|,|V_2|<|V|$. By induction
  hypothesis we have $|\sigma(V_1)|=\chi(G_1)$ and
  $|\sigma(V_2)|=\chi(G_2)$.

  First consider $G=G_1\join G_2$. Since $xy\in E(G)$ for all
  $x\in V_1$ and $y\in V_2$ we have $\sigma(x)\ne\sigma(y)$, and hence
  $\sigma(V_1)\cap\sigma(V_2)=\emptyset$. Thus,
  $\sigma(V)=\sigma(V_1)\cupdot\sigma(V_2)$ and therefore,
  \begin{equation*}
    |\sigma(V)| = |\sigma(V_1)| + |\sigma(V_2)| \stackrel{\text{IH.}}{=}
    \chi(G_1)+\chi(G_2) \stackrel{\text{Equ.\ }\eqref{eq:rec1}}{=} \chi(G).
  \end{equation*}
  We note that the coloring condition in \AX{(K2)} therefore only enforces
  that $\sigma$ is a proper vertex coloring.

  Now suppose $G=G_1\cupdot G_2$. Axiom \AX{(K3)} implies
  $|\sigma(V)|=\max( |\sigma(V_1)|,|\sigma(V_2)|)$. Hence,
  \begin{equation*}          
    |\sigma(V)| = \max( |\sigma(V_1)|,|\sigma(V_2)|)
    \stackrel{\text{IH.}}{=}
    \max(\chi(G_1),\chi(G_2)) \stackrel{\text{Equ.\ }\eqref{eq:rec1}}{=}
    \chi(G).
  \end{equation*}
\end{proof}

As detailed in \cite{Christen:79}, we have $\chi(G)=\gamma(G)$. Thus, it
seems natural to ask whether every greedy coloring is an
hc-coloring. Making use of the fact that  $\chi(G)=\gamma(G)$, we assume
w.l.o.g.\ that the color set is $S=\{1,2,\dots,\chi(G)\}$ whenever we
consider greedy colorings of a cograph. By definition of greedy colorings
and the fact that cographs are hereditarily well-colored, we immediately
observe
\begin{lemma}
  Let $\sigma$ be a greedy coloring of a cograph $G$ and
  $G_1=(V_1,E_1)$ a connected component of $G$. Then $G_1$ is colored by
  $\sigma(V_1)=\{1,\dots,\chi(G_1)\}$.
\end{lemma}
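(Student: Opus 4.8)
The plan is to reduce the global greedy coloring to a greedy coloring of the component $G_1$ in isolation, and then exploit two structural features of greedy colorings: that the set of colors they use forms an initial segment of the positive integers, and that on cographs they never exceed $\chi$.

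First I would observe that, because $G_1$ is a connected component of $G$, every neighbor of a vertex $v\in V_1$ again lies in $V_1$. Consequently, when the greedy procedure reaches $v$, the colors blocked by previously colored neighbors of $v$ are exactly those contributed by vertices of $V_1$. Restricting the global vertex order to $V_1$ therefore yields an order with respect to which $\sigma|_{V_1}$ is \emph{itself} a greedy coloring of $G_1$: each vertex of $V_1$ receives the smallest color not used by its already-colored neighbors within $G_1$, since it has no neighbors outside $V_1$ to constrain it.

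Next I would establish the ``no gaps'' property. If a vertex $v\in V_1$ receives color $c$, then by the greedy rule the colors $1,\dots,c-1$ were all unavailable when $v$ was processed, i.e. each was already used by some (necessarily $V_1$-)neighbor of $v$. Hence $c\in\sigma(V_1)$ forces $\{1,\dots,c-1\}\subseteq\sigma(V_1)$, so that $\sigma(V_1)=\{1,\dots,m\}$ with $m=\max\sigma(V_1)$, and in particular $m=|\sigma(V_1)|$. It then remains to identify $m$ with $\chi(G_1)$. Since $G_1$ is an induced subgraph of the cograph $G$, it is itself a cograph and therefore hereditarily well-colored, so $\gamma(G_1)=\chi(G_1)$. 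By the definition of the Grundy number a greedy coloring uses at most $\gamma(G_1)$ colors, while every proper coloring uses at least $\chi(G_1)$; applied to the greedy coloring $\sigma|_{V_1}$ this pins down $|\sigma(V_1)|=\chi(G_1)$. Combining this cardinality with the initial-segment shape yields $\sigma(V_1)=\{1,\dots,\chi(G_1)\}$, as claimed.

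The main obstacle I anticipate is the first reduction step: one must argue carefully that the restriction of a greedy coloring to a connected component really is a greedy coloring of that component, and this hinges precisely on the absence of edges leaving $V_1$. Once this is secured, the remaining ingredients are the elementary initial-segment property of greedy colorings together with the hereditary well-coloredness of cographs supplied above, and these combine directly to give the statement.
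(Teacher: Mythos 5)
Your proof is correct and takes exactly the route the paper intends: the paper gives no written proof, stating the lemma as an immediate consequence of the definition of greedy colorings together with the hereditary well-coloredness of cographs, and your three steps --- the restriction of $\sigma$ to a connected component is itself a greedy coloring since no edges leave $V_1$, greedy colorings use an initial segment $\{1,\dots,m\}$ of colors, and $\chi(G_1)\le m\le\gamma(G_1)=\chi(G_1)$ --- are precisely the elided details. Nothing is missing or superfluous in your argument.
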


We shall say that a cograph $G$ is a \emph{minimal counterexample for some
property $\mathcal{P}$} if (1) $G$ does not satisfy $\mathcal{P}$ and (2)
every induced subgraph of $G$ (i.e., every ``smaller'' cograph) satisfies
$\mathcal{P}$. 

\begin{lemma}
  \label{lem:grehc}
  Let $G$ be a cograph, $(T,t)$ an arbitrary binary cotree for $G$
  and $\sigma$ a greedy coloring of $G$. Then $\sigma$ is an
  hc-coloring w.r.t.\ $(T,t)$.
\end{lemma}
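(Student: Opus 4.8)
The plan is to argue by strong induction on $|V|$. The case $|V|=1$ is immediate. For $|V|>1$, let $\rho$ be the root of $(T,t)$ with children $v_1,v_2$, so that $G=G(v_1)\join G(v_2)$ or $G=G(v_1)\cupdot G(v_2)$. Recall that $(G,\sigma)$ is an hc-coloring w.r.t.\ $(T,t)$ exactly if, at every interior vertex, the color sets of its two children satisfy the corresponding condition: disjointness \AX{(K2)} at a join, and nestedness \AX{(K3)} (one color set contained in the other) at a union. Condition \AX{(K2)} holds automatically at every join vertex, since there each vertex below one child is adjacent to each vertex below the other, so a proper coloring must give them disjoint color sets. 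Hence it suffices to verify nestedness at every union vertex, and the induction hypothesis applied to the two subtrees rooted at $v_1,v_2$ together with the condition at $\rho$ will cover all interior vertices---\emph{provided} the restriction of $\sigma$ to each $G(v_i)$ is a greedy coloring, possibly after renaming colors.

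First I would treat the union root, $G=G(v_1)\cupdot G(v_2)$. Since there are no edges between $V(G(v_1))$ and $V(G(v_2))$, when the greedy procedure colors a vertex of $G(v_i)$ only its neighbors inside $G(v_i)$ can block colors, so $\sigma$ restricted to $G(v_i)$ is itself a greedy coloring of $G(v_i)$; by the induction hypothesis it is an hc-coloring w.r.t.\ the subtree rooted at $v_i$, settling all interior conditions strictly below $\rho$. For $\rho$ itself I note that each connected component of $G$ lies entirely in $G(v_1)$ or in $G(v_2)$, so the preceding lemma shows the colors on every component form an initial segment $\{1,\dots,\chi(\cdot)\}$; taking the union over the components inside $G(v_i)$ gives $\sigma(L(T(v_i)))=\{1,\dots,\chi(G(v_i))\}$. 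Two initial segments of $\mathbb{N}$ are nested, which is exactly \AX{(K3)} at $\rho$.

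The delicate case is the join root, $G=G(v_1)\join G(v_2)$, where the restriction of $\sigma$ to $G(v_i)$ need \emph{not} be greedy: while coloring a vertex of $G(v_1)$ the already-colored vertices of $G(v_2)$ block their colors, so the set $C_1:=\sigma(L(T(v_1)))$ of colors actually used on $G(v_1)$ is in general not an initial segment. The key step I would prove is the claim that, if $\phi_i$ is the unique order-preserving bijection from $C_i:=\sigma(L(T(v_i)))$ onto $\{1,\dots,|C_i|\}$, then $\phi_i\circ\sigma|_{G(v_i)}$ is a greedy coloring of $G(v_i)$. To establish it I would fix the vertex order $\pi$ underlying $\sigma$ and let $\pi_1$ be its restriction to $V(G(v_1))$. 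When $x\in V(G(v_1))$ is colored, the blocked colors split as $A\cup B$ with $A\subseteq C_1$ from earlier $G(v_1)$-neighbors and $B\subseteq C_2$ from earlier $G(v_2)$-vertices; as $C_1\cap C_2=\emptyset$ (every $G(v_1)$-vertex is adjacent to every $G(v_2)$-vertex), we get $m:=\sigma(x)=\operatorname{mex}(A\cup B)\in C_1$. Every integer below $m$ lies in $A\cup B$, so every element of $C_1$ smaller than $m$ lies in $A$ (it cannot lie in $B\subseteq C_2$), while $m\notin A$; hence $\phi_1(m)=\operatorname{mex}(\phi_1(A))$. An induction along $\pi_1$ then shows $\phi_1\circ\sigma|_{G(v_1)}$ obeys the greedy recursion for $G(v_1)$, proving the claim.

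With the claim available, the induction hypothesis applies to the greedy coloring $\phi_i\circ\sigma|_{G(v_i)}$ of $G(v_i)$ together with the subtree rooted at $v_i$, so it is an hc-coloring there. Because \AX{(K2)} and \AX{(K3)} only involve intersections and equalities of color sets, they are invariant under the bijective recoloring $\phi_i^{-1}$; thus $\sigma|_{G(v_i)}$ is an hc-coloring w.r.t.\ the subtree rooted at $v_i$, which secures all interior conditions strictly below $\rho$ for both $i$, and \AX{(K2)} holds at $\rho$ automatically. I expect the main obstacle to be precisely this join case: one must show that collapsing the ``holes'' left by the opposite side of a join turns the restricted coloring back into a genuine greedy coloring, which is exactly what the order-preserving relabeling claim accomplishes.
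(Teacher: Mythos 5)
Your proof is correct, and its skeleton matches the paper's: induction over the binary cotree (the paper phrases it as a minimal-counterexample argument), with \AX{(K2)} automatic at join nodes because properness forces disjoint color sets, and \AX{(K3)} at union nodes obtained from the preceding lemma, which makes the color set used on each side the initial segment $\{1,\dots,\chi(\cdot)\}$, and initial segments are nested. The genuine difference is your treatment of the join case, and it is an improvement. The paper's connected case simply asserts that a non-hc $\sigma$ ``must fail to be an hc-coloring on at least one of the $G_i$, contradicting minimality''---but for minimality to bite, the restriction of $\sigma$ to a join factor must again be a \emph{greedy} coloring, which is false as literally stated: the colors used on $G(v_i)$ need not form an initial segment (already in $K_1\join K_1$ the second factor receives color $2$). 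Your relabeling claim closes exactly this hole: letting $\phi_i$ be the order-preserving bijection $C_i\to\{1,\dots,|C_i|\}$, you show $\phi_i\circ\sigma|_{G(v_i)}$ obeys the greedy recursion, since every color of $C_1$ below $m=\sigma(x)$ must be blocked by an earlier neighbor \emph{inside} $G(v_1)$ (colors from $C_2$ cannot account for it, the two sets being disjoint), so $\phi_1(m)=\operatorname{mex}(\phi_1(A))$; combined with the observation that \AX{(K1)}--\AX{(K3)} involve only intersections and containments of color sets and are therefore invariant under bijective recoloring, the induction hypothesis legitimately applies to each join factor. What the paper's route buys is brevity---and under its looser definition of a greedy coloring (relative to an arbitrary ordering of an arbitrary color set) the restriction to a join factor is greedy for the inherited order, though verifying even that is precisely your mex computation. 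What your route buys is rigor: it is the same induction, but with the one nontrivial step---that collapsing the holes punched by the opposite side of a join restores a genuine greedy coloring---actually proved rather than tacitly assumed.
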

\begin{proof}
  Assume $G$ is a minimal counterexample, i.e., $G$ is a minimal cograph
  for which a coloring $\sigma$ exists that is a greedy coloring but not an
  hc-coloring. If $G$ is connected, then $G=\bigjoin_{i=1}^n G_i$, for some
  $n>1$ and $\sigma(V)=\bigcupdot_{i=1}^n \sigma(V_i)$, i.e., \AX{(K2)} is
  satisfied.  By assumption, $\sigma$ is not an hc-coloring, hence $\sigma$
  must fail to be an hc-coloring on at least one of the connected
  components $G_i$, contradicting the assumption that $G$ is a minimal
  counterexample. Thus, $G$ cannot be connected.

  Therefore, assume $G=\bigcupdot_{i=1}^n G_i$ for some $n>1$. Since $G$ is
  represented by a binary cotree $(T,t)$, the root of $T$ must have exactly
  two children $u$ and $v$. Hence, we can write $G=G(u)\cupdot G(v)$.
  Since $G$ is a minimal counterexample, we can conclude that $\sigma$
  induces an hc-coloring on $G(u)$ and $G(v)$.  However, since $\sigma$ is,
  in particular, a greedy coloring of $G(u)$ and $G(v)$,
  $\sigma(V(G(u)))\subseteq \sigma(V(G(v)))$ or
  $\sigma(V(G(v)))\subseteq \sigma(V(G(u)))$ most hold. But this
  immediately implies that $(G,\sigma)$ satisfies $\AX{(K3)}$ and thus
  $\sigma$ is an hc-coloring of $G$.  Therefore, $G$ is not a minimal
  counterexample, which completes the proof.
\end{proof}

As an immediate consequence we find
\begin{corollary}
  Every cograph has an hc-coloring.
\end{corollary}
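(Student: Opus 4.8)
The plan is to derive this directly from Lemma~\ref{lem:grehc}, so the proof should be essentially immediate. First I would observe that every cograph $G$ admits at least one binary cotree $(T,t)$. Starting from the recursive construction of Definition~\ref{def:Corneil}, any interior vertex of arity greater than two can be subdivided into a cascade of binary union (resp.\ join) vertices without changing the represented graph, exactly as noted in the text following Definition~\ref{def:Corneil}. Hence a binary cotree $(T,t)$ of $G$ always exists.

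Second, I would note that every finite graph, and in particular every cograph, admits at least one greedy coloring: fix an arbitrary ordering of $V$ and assign to each vertex in turn the smallest color not yet used on an already-colored neighbor. The resulting map $\sigma$ is by construction a proper vertex coloring and is, by definition, a greedy coloring of $G$ (using colors from $\{1,\dots,\chi(G)\}$ since cographs are hereditarily well-colored).

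Finally, applying Lemma~\ref{lem:grehc} to this $G$, the cotree $(T,t)$, and the coloring $\sigma$ shows that $\sigma$ is an hc-coloring w.r.t.\ $(T,t)$; by Definition~\ref{def:hc-coloring} this means $\sigma$ is an hc-coloring of $G$, establishing the claim.

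The main point to emphasize is that there is no genuine obstacle here: the entire substantive content has already been discharged in Lemma~\ref{lem:grehc}. What remains is only to certify the existence of the two ingredients that the lemma consumes, namely a binary cotree (guaranteed by the refinement of the construction in Definition~\ref{def:Corneil}) and a greedy coloring (guaranteed by the greedy procedure applied to any vertex ordering). Combining these with Lemma~\ref{lem:grehc} yields the corollary.
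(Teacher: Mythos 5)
Your proposal is correct and matches the paper exactly: the paper states this corollary as an immediate consequence of Lemma~\ref{lem:grehc}, and your argument simply makes explicit the two ingredients the lemma needs (a binary cotree, obtainable by refining the recursive construction of Definition~\ref{def:Corneil}, and a greedy coloring, which every graph admits). Nothing further is required.
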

The converse of Lemma \ref{lem:grehc} is not true.  Fig.\
\ref{Fig:hcnogrundy} shows an example of an hc-coloring that is not a greedy
coloring.

\begin{figure} 
  \begin{tabular}{ccc}
    \begin{minipage}{0.3\textwidth}
      \includegraphics[width=\textwidth]{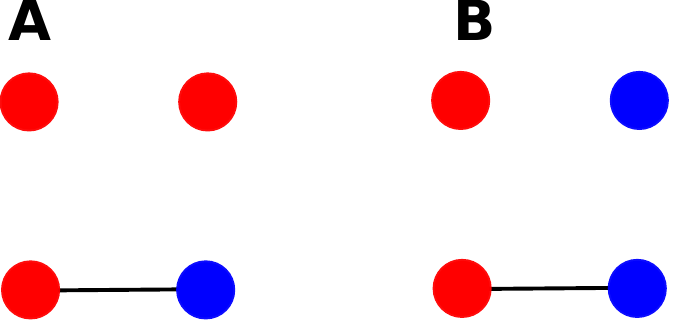}
    \end{minipage} &&
    \begin{minipage}{0.6\textwidth}
      \caption{Both colorings \textsf{A} and \textsf{B} of
        $K_2\cupdot K_1\cupdot K_1$ are hc-colorings. However, only
        \textsf{A} is a greedy coloring since the two single-vertex
        components have different colors in \textsf{B}.}
    \end{minipage} 
  \end{tabular}
  \label{Fig:hcnogrundy}
\end{figure}

\begin{theorem}
  A coloring $\sigma$ of a cograph $G$ is greedy coloring if and only if it
  is an hc-coloring w.r.t.\ every binary cotree $(T,t)$ of $G$.
\end{theorem}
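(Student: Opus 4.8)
The plan is to prove the two implications separately, since the forward one is already in hand. If $\sigma$ is a greedy coloring, then Lemma~\ref{lem:grehc} says that $\sigma$ is an hc-coloring with respect to an \emph{arbitrary} binary cotree, i.e., with respect to every binary cotree; this is the ``only if'' part verbatim. The content therefore lies entirely in the converse. I would argue by minimal counterexample, exactly in the style of Lemma~\ref{lem:grehc}: suppose $G$ is a smallest cograph carrying a coloring $\sigma$ that is an hc-coloring with respect to every binary cotree yet is not greedy. The one preparatory fact I would record first is an \emph{inheritance} property: if $G'$ arises as $G(u)$ for a vertex $u$ of some cotree, then any binary cotree of $G'$ extends to a binary cotree of $G$ (graft cotrees of the remaining parts onto the appropriate join/union nodes, as $G$ itself is built), so hc-ness of $\sigma$ on $G$ forces hc-ness of $\sigma|_{G'}$ on that cotree. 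Hence $\sigma|_{G'}$ is hc with respect to \emph{every} binary cotree of $G'$, and by minimality $\sigma|_{G'}$ is greedy.

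For the connected case, write $G=G(u)\join G(v)$ using the root of some binary cotree; by the inheritance property both $\sigma|_{G(u)}$ and $\sigma|_{G(v)}$ are greedy. Here I would use the standard reformulation of greediness: a coloring is greedy precisely when its color classes admit a linear order in which every vertex sees, among its neighbours, all classes preceding its own. Since every vertex of $G(u)$ is adjacent to every vertex of $G(v)$, I would take a greedy class-order of $G(u)$ followed by a greedy class-order of $G(v)$: a $G(v)$-vertex sees all the $G(u)$-classes automatically through the join and sees the earlier $G(v)$-classes by its own order, while a $G(u)$-vertex need only see the $G(u)$-classes before it. This exhibits a greedy ordering for $G$, contradicting minimality, so a minimal counterexample must be disconnected.

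For the disconnected case, write $G=G(u)\cupdot G(v)$ at the root; both restrictions are again greedy by minimality, and choosing the cotree whose root union node has children $u$ and $v$, axiom~\AX{(K3)} forces $\sigma(V(G(u)))$ and $\sigma(V(G(v)))$ to be nested. What remains---and what I expect to be the main obstacle---is to merge two greedy colorings with nested color sets into a single greedy coloring of the disjoint union. The difficulty is structural: there are no edges across the union, so the sought class-order must restrict to a valid greedy order on each side \emph{simultaneously}. The nesting supplies a compatible ``prefix'' relationship between the two color sets, but one must still certify that the precedence constraints the two sides impose on their shared colors are jointly satisfiable---equivalently, that the induced precedence relation on the color set $S$ is acyclic. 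I expect the clean top-level nesting to be insufficient on its own and the argument to require the hypothesis at \emph{every} union node of \emph{every} cotree, i.e., that the connected components appearing at each such node have pairwise nested color sets; from these nested families one then extracts a single global order of $S$ that is greedy on every component at once, ruling out the cyclic precedence that nesting at the root alone would permit. Carrying out this merging step, and the acyclicity verification inside it, is the crux; once it is done, $G$ is greedy and the minimal counterexample cannot exist.
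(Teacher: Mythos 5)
Your forward direction is verbatim the paper's (both just invoke Lemma~\ref{lem:grehc}), your inheritance-by-grafting observation is correct (the paper uses it silently when it restricts $\sigma$ to components and subtrees), and your connected case via concatenating the class orders of $G(u)$ and $G(v)$ across the join is a correct, slightly more explicit version of the paper's one-line argument. The genuine gap is that you do not prove the step you yourself single out as the crux: merging per-component greedy colorings with nested color sets into one greedy coloring of the disjoint union. A plan that ends with ``once this is done'' is not a proof, so as submitted the argument is incomplete. For comparison, the paper attacks exactly this step constructively: it groups the components $G_i$ by chromatic number into graphs $G^r$, represents each $G^r$ by a binary cotree $(T^r,t^r)$, and hangs these on a caterpillar ordered by increasing chromatic number; applying \AX{(K3)} together with Theorem~\ref{thm:hc-implies-chrom} to this one cotree forces components of equal chromatic number to carry \emph{identical} color sets and forces the distinct sets into a chain $S_1\subset S_2\subset\cdots\subset S_m$, after which the paper orders the colors along the chain and concludes that $\sigma$ is greedy because each component restriction is greedy by minimality.

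You should know, however, that your worry about acyclicity of the precedence constraints is not merely a technical loose end: the paper's concluding inference is exactly the unjustified step, because greediness is not invariant under permuting colors, and two components with the \emph{same} color set can impose contradictory class orders while satisfying every hc condition. Concretely, let $G_1$ and $G_2$ be two copies of the paw $K_1\join(K_2\cupdot K_1)$, with apex $x$ joined to the $K_2$ on $\{y,z\}$ and the isolated vertex $w$. Color $G_1$ by $\sigma(x)=3$, $\sigma(y)=1$, $\sigma(z)=2$, $\sigma(w)=1$, and $G_2$ by $\sigma(x')=3$, $\sigma(y')=2$, $\sigma(z')=1$, $\sigma(w')=2$. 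Each restriction is greedy ($G_1$ with color order $1<2<3$, $G_2$ with $2<1<3$), each paw has an essentially unique binary cotree on which all of \AX{(K1)}--\AX{(K3)} hold, and at the (forced) root union of $G_1\cupdot G_2$ the two color sets are both $\{1,2,3\}$, so $\sigma$ is an hc-coloring w.r.t.\ every binary cotree. Yet $w$ has color $1$ and sees only color $3$, while $w'$ has color $2$ and sees only color $3$: a greedy order may put only class $3$ before class $1$ and only class $3$ before class $2$, which no linear order of three classes satisfies, so $\sigma$ is not greedy. (Under the stricter reading in which the color order $1<2<\cdots$ is fixed, the single paw $G_2$ already exhibits the same failure.) So the merging step you flag cannot be completed from the nesting data alone, even granted it at every union node of every cotree; neither your plan nor the paper's proof closes this hole, and the equivalence needs either a stronger hypothesis or a genuinely different idea.
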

\begin{proof}
  By Lemma~\ref{lem:grehc}, every greedy coloring is an hc-coloring for
  every binary cotree $(T,t)$. Now suppose $\sigma$ is an hc-coloring for
  every binary cotree $(T,t)$ and let $G$ be a minimal cograph for which
  $\sigma$ is not a greedy coloring. As in the proof of Lemma
  \ref{lem:grehc} we can argue that $G$ cannot be a minimal counterexample
  if $G$ is connected: in this case, $G=\bigjoin_{i=1}^n G_i$ and
  $\sigma(V)=\bigcupdot_{i=1}^n \sigma(V_i)$ for all colorings, and thus
  $\sigma$ is a greedy coloring if and only if it is a greedy coloring with
  disjoint color sets for each $G_i$. Hence, a minimal counterexample must
  have at least two connected components.

  Let $G=\bigcupdot_{i=1}^n G_i$ for some $n>1$ and define a partition of
  $\{1,\dots,n\}$ into sets $I_1,\dots,I_m$, $m\geq 1$, such that for every
  $r\in \{1,\dots,m\}$, $i,j\in I_r$ if and only if
  $\chi(G_i) = \chi(G_j)$. Since every
  $G^r \coloneqq \bigcupdot_{i\in I_r} G_i$, $1\leq r\leq m$, is a cograph,
  each $G^r$ can be represented by a (not necessarily unique) binary cotree
  $(T^r,t^r)$.  Note, we have $\chi(G^r)=\chi(G_i)$ for all $i\in
  I_r$. Now, we can construct a binary cotree $(T,t)$ for $G$ as follows:
  let $T^*$ be a caterpillar with leaf set $L(T^*)=\{l_1,\dots,l_\ell\}$.
  We choose $T^* = (\ldots((l_1,l_2),l_3), \ldots l_\ell)$ (in Newick
  format). Note, if $\ell=1$, then $T^*\simeq K_1$.  Now, the root of every
  tree $T^1,\dots,T^r$ is identified with a unique leaf in $L(T^*)$ such
  that the root of $T_i$ is identified with $l_i\in L(T^*)$ and the
  root of $T_j$ is identified with $l_j\in L(T^*)$, where $i<j$ if and
  only if $\chi(G^i)<\chi(G^j)$. This yields the tree $T$.  The labeling
  $t$ for $(T,t)$ is provided by keeping the labels of each $(T^r,t^r)$ and
  by labeling all other inner vertices of $T$ by $0$.  It is easy to see
  that $(T,t)$ is a binary cotree for $G$. By hypothesis, this in
  particular implies that $\sigma$ is an hc-coloring w.r.t.\ $(T,t)$. We
  denote by $C(T^*)\subseteq V(T)$ the set of inner vertices of
  $T^*$. Since $\sigma$ is an hc-coloring w.r.t.\ $(T,t)$ and thus in
  particular w.r.t.\ any subtree $(T^r,t^r)$, we have
  $\sigma(V(G_i))\cap \sigma(V(G_j))\in \{\sigma(V(G_i)),\sigma(V(G_j))\}$
  for any $i,j\in I_r$, $1\le r\le m$, by \AX{(K3)}. Hence, as
  $\chi(G^r)=\chi(G_i)=\chi(G_j)$, it must necessarily hold
  $\sigma(V(G_i))= \sigma(V(G_j))$ for all $i,j\in I_r$, i.e., all
  connected components $G_i$ with the same chromatic number are colored by
  the same color set. By construction, at every node $v\in C(T^*)$ of the
  caterpillar structure, with children $v'$ and $v''$, the components
  $G':=G(v')$ and $G'':=G(v'')$ satisfy $\chi(G')<\chi(G'')$.  Invoking
  \AX{(K3)} we therefore have $\sigma(G')\subset\sigma(G'')$ and
  $G^*:=G(v)$ is colored by the color set $\sigma(G^*)=\sigma(G'')$.  These
  set inclusions therefore imply a linear ordering of the colors such that
  colors in $\sigma(G')$ come before those in
  $\sigma(G'')\setminus \sigma(G')$. Thus $\sigma$ is a greedy coloring
  provided that the restriction of $\sigma$ to each of the connected
  components $G_i$ of $G$ is a greedy coloring, which is true due to the
  assumption that $G$ is a minimal counterexample. Thus no minimal
  counterexample exists, and the coloring $\sigma$ is indeed a greedy
  coloring of $G$.
\end{proof}

Given an hc-cograph, it is not difficult to recover a corresponding binary
cotree. To this end, we proceed top down. Denote the root of $(T,t)$ by
$r$. It is associated with the graph $G(r)=G$. In the general step we
consider an induced subgraph $G(u)$ of $G$ associated with a vertex $u$ of
$T$. If $G(u)$ is connected, then $t(u)=1$ and $G(u)$ is the joint of pair
of induced subgraphs $G(v_1)$ and $G(v_2)$. To identify these graphs,
consider the connected components $\overline{G_1},\dots,\overline{G_k}$ of
the complement $\overline{G(u)}$ of $G(u)$. We have
\begin{equation}
  G(u) = \overline{\bigcupdot_{i=1}^k \overline{G_i}} =
  \bigjoin_{i=1}^k \overline{\overline{G_i}} = 
  \bigjoin_{i=1}^k G_i \,.
\end{equation}
We therefore set $G(v_1)=G_1$ and
$G(v_2)=\overline{\bigcupdot_{i=2}^k \overline{G_i}}=\bigjoin_{i=2}^k
G_i$. By construction, we therefore have $G(u)=G(v_1)\join G(v_2)$ with
disjoint color sets $\sigma(V(G(v_1)))$. If $G(u)$ is disconnected, define
$t(u)=0$, identify one of the components, say $G_1$, with the smallest
numbers of colors $|\sigma(V(G_1))|$ and set $G(v_1)=G_1$ and
$G(v_2)=G(u)\setminus G(v_1)$. The fact that $G(u)$ is an hc-cograph
ensures that $\sigma(V(G(v_1)))\subseteq \sigma(V(G(v_2)))$. In both the
connected and the disconnected case we attach $v_1$ and $v_2$ as the
children of $u$ in $T$. The reconstruction of $(T,t)$ can be preformed in
linear time.

\section{Recursively-Minimal Colorings}

Not every minimal coloring of a cograph is an hc-coloring. For instance, if
$G$ is a disconnected cograph, i.e., $G=\bigcupdot_{i=1}^n G_i$, then it
suffices that $|\sigma(V_i)|\le\chi(G)$. In this case, we may use more
colors than necessary on a connected component $G_i$ of $G$, resulting in
$\chi(G_i)\neq |\sigma(V_i)|$. Thus, Theorem \ref{thm:hc-implies-chrom}
implies that $\sigma$ is not an hc-coloring of $G_i$ and hence, by
definition, not an hc-coloring of $G$.  This suggests to consider another
class of colored cographs.
\begin{definition}
  A color-minimal cograph $(G,\sigma)$ is either a $K_1$, the disjoint
  union of color-minimal cographs or the join of color-minimal cographs,
  and satisfies $|\sigma(V)|=\chi(G)$. A coloring $\sigma$ of a
  color-minimal cograph will be called \emph{recursively minimal}.
\end{definition}
Color-minimal cographs thus are those colorings for which every constituent
in their construction along \emph{some} binary cotree is colored with the
minimal number of colors. Since every greedy coloring of every cograph
satisfies this condition, every cograph has a recursively minimal coloring.

\begin{theorem}
  Let $G$ be a cograph. A coloring $\sigma$ of $G$ is recursively minimal
  if and only it is an hc-coloring.
\label{thm:mrc}
\end{theorem}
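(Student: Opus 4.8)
The plan is to prove a single sharper statement from which both implications of the theorem follow at once: \emph{for any fixed binary cotree $(T,t)$ of $G$, the colored graph $(G,\sigma)$ is an hc-cograph w.r.t.\ $(T,t)$ if and only if $|\sigma(V(G(u)))|=\chi(G(u))$ holds for every vertex $u\in V(T)$}. I would prove this equivalence by structural induction along $(T,t)$. The base case $G(u)=K_1$ is immediate, since $|\sigma(V)|=1=\chi(K_1)$ and \AX{(K1)} holds. For the inductive step let $u$ have children $v_1,v_2$, assume the equivalence for the two subtrees rooted at $v_1$ and $v_2$, and write $A=G(v_1)$ and $B=G(v_2)$.

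At a join node $G(u)=A\join B$, every vertex of $A$ is adjacent to every vertex of $B$, so properness of $\sigma$ already forces $\sigma(V(A))\cap\sigma(V(B))=\emptyset$; thus \AX{(K2)} holds automatically and $|\sigma(V(G(u)))|=|\sigma(V(A))|+|\sigma(V(B))|$. Comparing with $\chi(G(u))=\chi(A)+\chi(B)$ from Equation~\eqref{eq:rec1}, color-minimality at $u$ is equivalent to color-minimality at both children, which by the induction hypothesis is equivalent to \AX{(K1)}--\AX{(K3)} holding throughout the two subtrees. At a union node $G(u)=A\cupdot B$ we have $\sigma(V(G(u)))=\sigma(V(A))\cup\sigma(V(B))$. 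Using $\chi(G(u))=\max(\chi(A),\chi(B))$ together with the induction hypothesis $|\sigma(V(A))|=\chi(A)$ and $|\sigma(V(B))|=\chi(B)$, the identity $|\sigma(V(A))\cup\sigma(V(B))|=\max(|\sigma(V(A))|,|\sigma(V(B))|)$ holds if and only if one of the two color sets is contained in the other, which is precisely axiom \AX{(K3)}. This closes the induction and establishes the sharper statement.

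Given this lemma, the implication ``hc-coloring $\Rightarrow$ recursively minimal'' is immediate: if $\sigma$ is an hc-coloring w.r.t.\ some binary cotree $(T,t)$, then every subtree exhibits $(G(u),\sigma)$ as an hc-cograph, so by Theorem~\ref{thm:hc-implies-chrom} we have $|\sigma(V(G(u)))|=\chi(G(u))$ at every node, and the cotree itself witnesses that $(G,\sigma)$ is a color-minimal cograph. For the converse I would pass from an arbitrary (not necessarily binary) witnessing cotree to a binary one. Join nodes binarize freely, since the color sets of the parts of a join are pairwise disjoint and every partial join $\bigjoin_{i\ge j} G_i$ satisfies $|\sigma(V)|=\sum_{i\ge j}\chi(G_i)=\chi\big(\bigjoin_{i\ge j}G_i\big)$ and hence stays color-minimal. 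The delicate case is a union node $G(u)=\bigcupdot_i G_i$: here color-minimality gives $|\sigma(V(G(u)))|=\max_i\chi(G_i)$, and choosing a part $G_k$ with $\chi(G_k)=\max_i\chi(G_i)$ one finds $\sigma(V(G(u)))=\sigma(V(G_k))$, whence $\sigma(V(G_i))\subseteq\sigma(V(G_k))$ for all $i$. I would therefore binarize the union as a caterpillar that attaches $G_k$ at the deepest position and appends the remaining parts in any order; at every internal node the accumulated color set equals $\sigma(V(G_k))$ and hence contains the color set of the next attached part, so \AX{(K3)} — equivalently color-minimality — holds at each new node. Applying this recursively yields a binary cotree that is color-minimal at every vertex, and by the lemma $\sigma$ is an hc-coloring.

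The main obstacle is exactly this binarization at union nodes: a naive pairing of two parts with incomparable color sets would violate \AX{(K3)}, even though the overall union is color-minimal. The resolution is the observation that color-minimality at a union node forces all parts' color sets to be nested inside the one of maximum chromatic number, which makes the ``max-first'' caterpillar order succeed. Once this is in place, the node-local equivalence between color-minimality and \AX{(K2)}/\AX{(K3)} does all of the remaining work.
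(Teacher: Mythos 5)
Your proof is correct, but it takes a genuinely different route from the paper. The paper argues by minimal counterexample: the forward direction is read off from Theorem~\ref{thm:hc-implies-chrom} and the recursive definitions, and for the converse a minimal non-hc recursively minimal $(G,\sigma)$ is shown to be impossible by splitting a disconnected $G$ as $G_1\cupdot G'$ with $\chi(G_1)=\chi(G)$ and $G'=\bigcupdot_{i\ge 2}G_i$, invoking minimality on both parts and verifying \AX{(K3)}. You instead prove a sharper node-local characterization --- $\sigma$ is an hc-coloring w.r.t.\ a \emph{fixed} binary cotree $(T,t)$ if and only if $|\sigma(V(G(u)))|=\chi(G(u))$ at every $u\in V(T)$ --- which subsumes Theorem~\ref{thm:hc-implies-chrom}, and you then bridge the arity mismatch between the (multiway) definition of color-minimality and the (binary) definition of hc-colorings by an explicit binarization, using the ``max-first'' caterpillar at union nodes. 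This extra care is not wasted: it quietly repairs a step the paper glosses over. The paper asserts that $\sigma$ induces a recursively minimal coloring on $G'=\bigcupdot_{i=2}^n G_i$, which can fail --- take $G_1=K_2$ colored $\{1,2\}$, $G_2=K_1$ colored $1$, $G_3=K_1$ colored $2$: then $(G,\sigma)$ is recursively minimal and is an hc-cograph, but $G'=G_2\cupdot G_3$ uses two colors although $\chi(G')=1$, so $(G',\sigma')$ is neither recursively minimal nor an hc-cograph, and the paper's bracketing $G_1\cupdot G'$ breaks down. Your observation that color-minimality at a union node forces $\sigma(V(G(u)))=\sigma(V(G_k))$ for a part of maximum chromatic number, so that every part's color set is nested in $\sigma(V(G_k))$ and the caterpillar with $G_k$ deepest satisfies \AX{(K3)} and color-minimality at every intermediate node, is exactly the right fix. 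In exchange, the paper's argument is shorter; yours is longer but yields a reusable local criterion (hc w.r.t.\ a \emph{given} cotree iff color-minimal along it) and a fully rigorous treatment of the binarization.
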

\begin{proof}
  Since every hc-coloring $\sigma$ of a cograph $G$ uses exactly $\chi(G)$
  colors, the recursive definition of hc-colorings immediately implies that
  $\sigma$ is recursively minimal.

  Now suppose there is a minimal cograph $G$ with a coloring $\sigma$ that
  is recursively minimal but not an hc-coloring. If $G$ is connected, then
  $G=\bigjoin_{i=1}^n G_i$ for some $n\ge 2$ and the restrictions of
  $\sigma$ to the connected components $G_i$ use disjoint color
  sets. Hence, $\sigma$ is an hc-coloring whenever the restriction to each
  $G_i$ is an hc-coloring.  Thus a minimal counterexample cannot be
  connected. Now suppose $G=\bigcupdot_{i=1}^n G_i$ for some $n\ge
  2$. Since $(G,\sigma)$ is by assumption a minimal counterexample, each
  connected component $(G_i,\sigma_i)$ is an hc-cograph.  By Equ.\
  \eqref{eq:rec1} there is a connected component, say w.l.o.g.\ $G_1$, such
  that $\chi(G)=\chi(G_1)$. By definition, $\sigma$ induces a recursively
  minimal coloring $\sigma_1$ on $G_1$ and $\sigma'$ on
  $G'=\bigcupdot_{i=2}^n G_i$.  Since $G$ is a minimal counterexample,
  $\sigma_1$ and $\sigma'$ are hc-colorings of $G_1$ and $G'$,
  respectively, in other words $(G_1,\sigma_1)$ and $(G',\sigma')$ are
  hc-cographs. Moreover, $\chi(G)=\chi(G_1)$ implies
  $\sigma'(V(G'))\subseteq \sigma_1(V(G_1))$. In summary, therefore,
  $(G,\sigma) = (G_1,\sigma_1) \cupdot (G',\sigma')$ satisfies \AX{(K3)},
  thus it is a cograph with hc-coloring $\sigma$.  Hence, there cannot
  exist a minimal cograph with a coloring $\sigma$ that is recursively
  minimal but not an hc-coloring.
\end{proof}

Recursively minimal colorings can be constructed in a very simply manner by
stepwisely relabeling colors of disconnected subgraphs as outlined in
Alg.~\ref{alg:cBMG}.

\begin{algorithm}
  \caption{Recursively minimal coloring of a cograph}
  \label{alg:cBMG}
  \algsetup{linenodelimiter=}
  \begin{algorithmic}[1]
    \REQUIRE Cograph $G$
    \STATE $(T_G,t_G) \leftarrow$ discriminating cotree of $G$ 	
    \STATE initialize all $v \in V(G)$ with different colors
    \FORALL {$v\in V^{0}(T_G)$, from bottom to top}
       \IF {$t(v) = 0$} 
          \STATE $\mathcal{G} \leftarrow$ set of connected components of $G(v)$
          \STATE $G^* \leftarrow \argmax_{G_i\in\mathcal{G}} \chi(G_i)$
          \STATE $S \leftarrow \sigma(V(G^*))$ 
          \FORALL {$G_j\in\mathcal{G}, G_j\neq G^*$} 
             \STATE randomly choose an injective map
                      $\phi:\sigma(V(G_j))\to S$
             \FORALL {$x\in V(G_j)$}
                \STATE $\sigma(x)\leftarrow \phi(\sigma(x))$  
             \ENDFOR
          \ENDFOR
       \ENDIF
    \ENDFOR
  \end{algorithmic}
\end{algorithm}

\begin{theorem}\label{Thm:Algo1}
  Given a cograph $G$, Algorithm \ref{alg:cBMG} returns a recursively
  minimal coloring of $G$. Moreover, every recursively minimal coloring of
  a cograph $G$ can be constructed with this algorithm.
\end{theorem}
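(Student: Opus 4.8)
The statement has two halves: soundness — every output of Algorithm~\ref{alg:cBMG} is recursively minimal — and completeness — every recursively minimal coloring arises as some output. For soundness I would argue by bottom-up induction along the discriminating cotree $(T_G,t_G)$, proving the invariant that once the loop has processed the subtree rooted at $v$, the colored graph $(G(v),\sigma)$ is color-minimal, and in particular $|\sigma(V(G(v)))|=\chi(G(v))$. The crucial auxiliary observation is a \emph{leak-freeness lemma}: at every stage $\sigma(V(G(w)))$ is contained in the set of colors with which the leaves below $w$ were initialized, because the only recoloring step $\sigma(x)\leftarrow\phi(\sigma(x))$ sends $\sigma(V(G_j))$ injectively into $S=\sigma(V(G^*))$ with both $G_j$ and $G^*$ components of the \emph{same} $G(v)$; hence colors are never imported across sibling subtrees. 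At a join vertex (which the algorithm leaves untouched) this keeps the children's color sets disjoint, so $|\sigma(V(G(v)))|=\sum_i\chi(G_i)=\chi(G(v))$ by Equ.~\eqref{eq:rec1}, while injectivity of $\phi$ together with disjointness preserves properness; at a union vertex the recoloring collapses every component's palette into that of a maximum component $G^*$, giving $|\sigma(V(G(v)))|=\max_i\chi(G_i)=\chi(G(v))$. The definition of color-minimality then identifies the output as recursively minimal.

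For the converse I would start from a recursively minimal coloring $\tau$ and first transfer color-minimality to the discriminating cotree: a witnessing decomposition can be flattened node by node — merging a union into a parent union, or a join into a parent join, preserves the defining identities of Equ.~\eqref{eq:rec1} — so without loss of generality $|\tau(V(G(v)))|=\chi(G(v))$ holds at every $v\in V^0(T_G)$. This yields the structural facts the construction needs: at a join vertex the children carry pairwise disjoint $\tau$-color sets (true for any proper coloring), and at a union vertex $G(v)=\bigcupdot_i G_i$ the identity $|\tau(V(G(v)))|=\max_i\chi(G_i)=\max_i|\tau(V(G_i))|$ forces every $\tau(V(G_i))$ to be contained in the color set of a maximum-$\chi$ component, each such maximum component realizing the full set $\tau(V(G(v)))$. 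I would then exhibit choices for Algorithm~\ref{alg:cBMG} reproducing $\tau$: at each union vertex take $G^*$ to be such a maximum component (any $\argmax$ works, since all maximum components realize the full set), and define $\phi$ on $G_j$ by sending each current color class to the color that the corresponding $\tau$-class currently carries in $G^*$; this is a well-defined injection into $S$ precisely because $\tau(V(G_j))\subseteq\tau(V(G^*))$.

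Correctness of these choices I would verify by a second bottom-up induction establishing the \emph{partition invariant}: after the subtree rooted at $v$ is processed, $\sigma|_{G(v)}$ induces the same color classes as $\tau|_{G(v)}$. The leak-freeness lemma again secures the join case, and the displayed choice of $\phi$ makes $\sigma|_{G(v)}$ equal $\tau|_{G(v)}$ up to a single global renaming of colors in the union case. Finally, to obtain $\tau$ on the nose rather than only up to renaming, I would fix the initialization in line~2 by choosing, for each color of $\tau$, one representative vertex lying on the chain of maximum components and initializing it with its target color — these representatives have pairwise distinct $\tau$-colors, since any two of them have a join vertex as least common ancestor — and fresh distinct colors elsewhere; an easy induction then shows the renaming is the identity at every union vertex, so the output is exactly $\tau$.

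The step I expect to be the main obstacle is this last reconciliation of names. The algorithm is forced to start from an all-distinct coloring, whereas $\tau$ identifies vertices, so one cannot simply preset the leaves to $\tau$: two $\tau$-equal vertices would demand the same initial color. The clean way around this is precisely the separation just described — carry only the \emph{partition} up the tree (names being absorbed into the per-node bijection that each $\phi$ pushes forward) and pin the actual labels down with a single well-chosen set of representatives. A secondary, more routine, point to get right is the justification that color-minimality may be assumed with respect to the discriminating cotree, since the algorithm operates on that specific tree while recursive minimality is defined existentially over cotrees.
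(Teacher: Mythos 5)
Your proof is correct and takes essentially the same route as the paper: soundness by bottom-up induction along the discriminating cotree showing $|\sigma(V(G(v)))|=\chi(G(v))$ at every inner vertex, and completeness by exhibiting the choices of $G^*$, the injections $\phi$, and the initial colors that make the algorithm reproduce a given recursively minimal coloring $\tau$. You are in fact considerably more rigorous than the paper, whose converse direction consists of the single sentence that $\tau$ ``can be obtained with a particular injection $\phi$'' --- your leak-freeness lemma, the transfer of color-minimality from an arbitrary witnessing cotree to the discriminating cotree, and the representative-based initialization pinning down exact color names are precisely the details that sentence leaves implicit.
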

\begin{proof}
  The bottom-up traversal of the cotree ensures that for every inner vertex
  $v$ of $T_G$, the subgraphs induced by its children ${v_1,...,v_j}$ are
  color-minimal cographs. In particular this means that
  $|\sigma(V(G(v_i))|=\chi(G(v_i))$ for all $i\in \{1,..,j\}$. Take an
  arbitrary $v\in V^0(G)$. Suppose $t(v)=0$.  The fact that $T_G$ is a
  discriminating cotree implies that all $t(v_i)=1$, for all
  $i\in \{1,..,j\}$, and therefore $G_i := G(v_i)$ are connected
  components. Futhermore, there exists a $G_k$ such that
  $\chi(G_k)=\max_{i}\chi(G_i)$. These observations and Equ.\
  \eqref{eq:rec1} guarantee that lines 5-7 obtain a set of color $S$ such
  that $|S|=\chi(G(v))$. Explicitly, $\chi(G(v))=\chi(G_k)$ and
  $S=\sigma(V(G_k))$. An injective recoloring
  $\phi:\sigma(V(G_i)) \rightarrow S$, in lines 8-11, assures that
  $\sigma(G_i)\subseteq S$ and therefore $\sigma(V(G(v))=S$. This implies
  that $G(v)$ is a color-minimal cograph with $\sigma$ a recursively
  minimal coloring.  The converse is followed by the fact that every
  recursively minimal coloring can be obtained with a particular injection
  $\phi$.
\end{proof}

Algorithm \ref{alg:cBMG} can be modified easily to construct a recursively
minimal coloring of $G$ with respect to a user defined cotree $(T,t)$.  It
suffices to replace the connected components of $G(v)$ by the (not
necessarily connected) induced subgraphs $G(w)$ corresponding to the
children of $v$. Since $\chi(G(v))=\max_{w\in\child(v)}\chi(G(w))$, it
suffices to choose the color set of the child that uses the largest number
of colors and re-color all other child-graphs with this color set. For
completeness, we summarize this variant in Algorithm \ref{alg:cBMG2}.

\begin{algorithm}
  \caption{Recursively minimal coloring of a cograph w.r.t.\ a given
    cotree.}
  \label{alg:cBMG2}
  \algsetup{linenodelimiter=}
  \begin{algorithmic}[1]
    \REQUIRE Cograph $G$ and co-tree $(T,t)$
    \STATE initialize all $v \in V(G)$ with different colors
    \FORALL {$v\in V^{0}(T)$, from bottom to top}
       \IF {$t(v) = 0$} 
          \STATE $\mathcal{G} \leftarrow \{G(w)\colon w\in\child(v)\}$ 
          \STATE $G^* \leftarrow \argmax_{w\in\child(v)} |\chi(G(w))|$
          \STATE $S \leftarrow \sigma(V(G^*))$ 
          \FORALL {$G_j\in\mathcal{G}, G_j\neq G^*$} 
             \STATE randomly choose an injective map
                      $\phi:\sigma(G_j)\to S$
             \FORALL {$x\in V(G_j)$}
                \STATE $\sigma(x)\leftarrow \phi(\sigma(x))$  
             \ENDFOR
          \ENDFOR
       \ENDIF
    \ENDFOR
  \end{algorithmic}
\end{algorithm}

The recursive structure of hc-cographs can also be used to count the number
of distinct hc-colorings of a cograph $G$ that is explained by a cotree
$(T,t)$. For an inner vertex $u$ of $T$ denote by $N(G(u))$ the number of
hc-colorings of $G(u)$. If $u$ is a leaf, then $N(u)=1$.  Recall that $T$
is binary by Def.\ \ref{def:hc-coloring}, i.e.,
$\child(u)=\{v_1,v_2\}$. For $t(u)=1$, we have $N(G(u))=N(G(v_1))N(G(v_2))$
since the color sets are disjoint. If $t(u)=0$, assume, w.l.o.g.\
$s_1:=|\sigma(G(v_1))|\le|\sigma(G(v_2))|=:s_2$,
$N(G(u))=N(G(v_1)) N(G(v_2)) g(s_1,s_2)$, where $g(s_1,s_2)$ is the number
of injections between a set of size $s_1$ into a set of size $s_2$, i.e.,
$g(s_1,s_2)=\binom{s_2}{s_1} s_1!$.

The total number of hc-colorings can be obtained by considering a
  caterpillar tree for the step-wise union of connected components. For
  each connected component $G_i$ with $\chi(G_i)=s_i$, and $s=\max_i s_i$
  there are $\binom{s}{s_i}$ choices of the colors, i.e., $g(s,s_i)$
  injections and thus $N(G)=\prod_i g(s,s_i) N(G_i)$ colorings. We
  note in passing that the chromatic polynomial of a cograph, and thus the
  number of colorings using the minimal number of colors, can be computed
  in polynomial time \cite{Makowsky:06}. There does not seem to be an obvious
  connection between the hc-colorings and the chromatic polynomial,
  however.

\section{Concluding Remarks}

The cotrees $(T,t)$ associated with a cograph are a special case of the
modular decomposition tree \cite{Gallai:67}, which in addition to disjoint
unions and joins also contains so-called prime nodes. The latter have a
special structure known as spiders, which also admit a well-defined unique
decomposition in so-called $P_4$-sparse graphs \cite{Jamison:92}. For this
type of graphs it also makes sense to consider recursively minimal
colorings. More generally, many interesting classes of graphs admit
recursive constructions \cite{Proskurowski:81,Noy:04}. For every graph
class that has a recursive construction, one can ask whether minimal
colorings can be constructed from optimal colorings, i.e., whether
recursively minimal colorings exist. In some cases, such Cartesian products
of graphs, where $\chi(G)$ equals the maximum of the chromatic numbers of
the factors \cite{Sabidussi:57}, this seems rather straightforward. In
general, however, the answer is probably negative.

\subsection*{Acknowledgments} 

This work was support in part by the German Federal Ministry of Education
and Research (BMBF, project no.\ 031A538A, de.NBI-RBC) and the Mexican
Consejo Nacional de Ciencia y Tecnolog{\'i}a (CONACyT, 278966 FONCICYT2).

\bibliographystyle{plain}
\bibliography{cographcoloring}
\end{document}